\newcommand\undermat[2]{%
\makebox[0pt][l]{$\smash{\underbrace{\phantom{%
\begin{matrix}#2\end{matrix}}}_{\text{$#1$}}}$}#2}
\newtheorem{theorem}{Theorem}
\newtheorem{lem}[theorem]{Lemma}
\newtheorem{cor}[theorem]{Corollary}
\newtheorem{thm}[theorem]{Theorem}
\newtheorem{defi}[theorem]{Definition}
\newtheorem{exm}[theorem]{Example}
\newcommand{\p}{\mathlarger{\mathlarger{\mathlarger{+}}}}
\newcommand{\n}{\mathlarger{\mathlarger{\mathlarger{-}}}}
\title{\Large\bf  Hadamard matrices with few distinct types\vspace{1cm}}
\author{
{\Large A. Mohammadian} \, and \, {\Large B. Tayfeh-Rezaie}\\[5mm]
School of Mathematics,\\
Institute for Research in Fundamental Sciences\,(IPM),\\
P.O. Box 19395-5746, Tehran, Iran\\[2mm]
$\text{\itshape{\textsf{ali$\mathsmaller{\_}$m@ipm.ir}}}$ \quad and \quad
$\text{\itshape{\textsf{tayfeh-r@ipm.ir}}}$\vspace{1cm}}
\date{}
\begin{document}
\maketitle

\begin{abstract}
\noindent The notion of type of quadruples of rows   is proven to be useful in the classif{}ication of   Hadamard matrices.  In this  paper, we  investigate Hadamard matrices with few distinct types. Among other results,      the Sylvester Hadamard matrices  are shown to be  characterized    by  their  spectrum of   types.

\vspace{5mm}
\noindent {\bf Keywords:}  Hadamard matrix, Prof{}ile, Sylvester Hadamard matrix, Type. \\[1mm]
\noindent {\bf AMS Mathematics Subject Classif{}ication\,(2010):}   05B20, 15B34.
\end{abstract}

\vspace{5mm}

\section{Introduction}

A {\sl  Hadamard matrix} of order $n$ is an $n\times n$  matrix $H$ with entries in $\{-1, 1\}$   such that $HH^\top=nI$, where $H^\top$ is
the transpose of $H$ and $I$ is the $n\times n$ identity matrix.
It is well known that the order of a Hadamard matrix is $1$, $2$,   or a multiple of $4$  \cite{pal}.
For a very long time until now,  it is unknown that Hadamard matrices of order $n$ exist for any  $n$   divisible by $4$.
The order $668$ is the  smallest  for which the existence of a Hadamard matrix is  open to question  \cite{kha}.
Hadamard matrices were f{}irst investigated by    Sylvester  in  \cite{syl} who gave  an explicit construction for  Hadamard matrices of  any order which is a power of $2$.
Such matrices were later  considered by    Hadamard  as solutions to the problem of f{}inding the maximum determinant of an $n\times n$ matrix  with entries from the complex unit disk  \cite{had}.
Since then,  Hadamard matrices have been widely studied and haven found many applications  in combinatorics and other    scientif{}ic areas \cite{hor}.

Two Hadamard matrices are said to be {\sl equivalent} if one can be obtained from the other by a sequence of row negations, row permutations, column negations, and column permutations.
Classif{}ication of Hadamard matrices up to order $32$, with respect to the   equivalence  relation,  has been fulf{}illed by several authors.
For references we refer to  \cite{khar}.
The resulting classif{}ication is shown   in Table \ref{num}.
As it can be seen from Table \ref{num},   a combinatorial explosion in the number of Hadamard matrices occurs  in   the order $32$.
Full classif{}ication in  order $36$ or  more  seems to be   dif{}f{}icult and perhaps   inaccessible.
\begin{table}[h]
\begin{center}
\begin{tabular}{|c|c|c|c|c|c|c|c|c|c|c|}\hline
$n$ & 1 & 2 & 4 & 8 & 12 & 16 & 20 & 24 & 28 & 32  \\ \hline
\texttt{\#} & 1 & 1 & 1 & 1 & 1 & 5 & 3 & 60 & 487 & 13710027  \\ \hline
\end{tabular}
\caption{The number of equivalence classes of Hadamard matrices of order $n\leqslant32$.}\label{num}
\end{center}
\end{table}

In the above mentioned classif{}ications, the authors associated an integer number, called  type,  to any quadruple of the rows of a Hadamard matrix.
We give the  def{}inition of type in the next section.
It   seems  that the notion of type deserves to be    investigated to a greater  extent.
Apparently,  Hadamard matrices with  few distinct types are very rare and  have  nice combinatorial  properties.
For instance, the  Sylvester Hadamard matrices have only two distinct types for quadruples of rows.
Furthermore, there are f{}ive Hadamard matrices obtained from strongly regular graphs  on $36$ vertices  with exactly   two distinct types \cite{spe}.
In this  paper, we show that there exists  no Hadamard matrix of order larger than $12$ whose  quadruples of rows are all  of the same type.
We then focus on Hadamard metrics with   two distinct types.
Among other results, it is established   that   the   Sylvester Hadamard matrices  are   characterized    by  their  spectrum of   types.

\section{Preliminaries}

In this section, we f{}ix our  notation and present some preliminary results.  We  denote  the zero    vector and the all one    vector of   length  $k$  by  $\mathsf{0}_k$ and $\mathsf{1}_k$,  respectively.  A zero matrix is denoted by ${\bf 0}$. For convenience,   we  respectively use the notation $${\buildrel \mathlarger{\, \,  r} \over \p} \, \, \text{ and  } \, \,  {\buildrel \mathlarger{s} \over \n}$$
instead of
$$\underbrace{1 \, \cdots \, 1}_r \, \,  \text{ and } \, \,  \underbrace{-\!1 \, \cdots \, -\!\!1}_s\cdot$$
We   drop the superscripts  whenever  there is no danger of confusion.

Let $H$ be a Hadamard matrix of order $n$.
We know  from \cite{coo} that,  by a sequence of  row  negations,    column negations,  and   column   permutations, every four  distinct  rows $i, j, k, \ell$
of $H$ may be transformed uniquely to the   form
$$\begin{array}{rrrrrrrrrr} &  & s &  t  & t  & s & t  & s   & s  & t\\
i & :&  \p & \p &
\p &\p &
\p &\p    &
\p &\p\\
j & :& \p& \p& \p& \p& \n & \n & \n & \n\\
k  & :& \p& \p& \n & \n & \p& \p& \n & \n\\
\ell & :& \p& \n & \p& \n & \p& \n & \p& \n
\end{array}$$
for some $s, t$ with $s+t=n/4$ and  $0\leqslant t\leqslant\lfloor n/8\rfloor$.
Following \cite{kim}, we def{}ine   the {\sl type} of the four rows   $i, j, k, \ell$ as  $T_{ijk\ell}=t$. It is straightforward to   check  that   $T_{ijk\ell}=\tfrac{n-P_{ijk\ell}}{8}$, where $$P_{ijk\ell}=\left|\sum_{r=1}^n h_{ir}h_{jr}h_{kr}h_{\ell r}\right|$$
assuming  that $h_{uv}$ is   the $(u, v)$-entry of $H$.
This in particular  shows that `type' is an equivalence invariant, meaning that   any permutation or negation of rows  and columns leaves the type unchanged.

The following lemma plays  a key role in the sequel of  paper.

\begin{lem}\label{type}{\sl
Let $H$ be a  Hadamard matrix of order $4m$.
Fix  three  rows of $H$  and let $\kappa_t$ be the number of other  rows   which are of type $t$ with theses   three rows.
Then
$$\sum_{t=0}^{\left\lfloor\frac{m}{2}\right\rfloor}\kappa_t(m-2t)^2=m^2.$$
}\end{lem}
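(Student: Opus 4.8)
The plan is to reinterpret the quantity $(m-2t)^2$ as a squared inner product and then exploit the orthogonality of the rows of $H$ via the Parseval identity. Denote the three fixed rows by $a,b,c$, and for a row $i\notin\{a,b,c\}$ write $t_i$ for the type of the quadruple $a,b,c,i$. Using the identity $T_{ijk\ell}=\tfrac{n-P_{ijk\ell}}{8}$ recorded above together with $n=4m$, a one-line computation gives $m-2t_i=\tfrac{1}{4}P_{abci}$, hence $(m-2t_i)^2=\tfrac{1}{16}P_{abci}^2$. The claim is therefore equivalent to $\sum_{i\notin\{a,b,c\}}P_{abci}^2=n^2$.

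The key device is to form the entrywise (Hadamard) product $v=(v_1,\ldots,v_n)$ of the three fixed rows, that is, $v_r=h_{ar}h_{br}h_{cr}$, which is again a vector with entries in $\{-1,1\}$. By definition $P_{abci}=|\langle v, H_i\rangle|$, where $H_i$ denotes the $i$-th row of $H$. Since $HH^\top=nI$, the rows $H_1,\ldots,H_n$ are pairwise orthogonal with $\|H_i\|^2=n$, so the vectors $H_i/\sqrt{n}$ form an orthonormal basis of $\mathbb{R}^n$. Expanding $v$ in this basis, Parseval's identity yields $\sum_{i=1}^{n}\langle v,H_i\rangle^2=n\|v\|^2=n\cdot n=n^2$, the last step because $v$ is a $\pm1$ vector of length $n$.

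It then remains to check that the three fixed rows contribute nothing to this sum. For $i=a$ one computes $\langle v,H_a\rangle=\sum_r h_{ar}h_{br}h_{cr}h_{ar}=\sum_r h_{br}h_{cr}=\langle H_b,H_c\rangle=0$, since $b\neq c$ and distinct rows of $H$ are orthogonal; the cases $i=b$ and $i=c$ are handled identically. Removing these three vanishing terms leaves $\sum_{i\notin\{a,b,c\}}P_{abci}^2=n^2$, which is exactly the reformulated claim. Grouping the remaining rows according to their type $t$, whose range is $0\leqslant t\leqslant\lfloor n/8\rfloor=\lfloor m/2\rfloor$, and recalling that $\kappa_t$ counts the rows of each given type, completes the argument.

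I do not anticipate a genuine obstacle here: the proof reduces to Parseval plus the orthogonality of distinct rows, and the only points needing care are the bookkeeping that converts types into inner products and the verification that the self-contributions of $a,b,c$ vanish. The essential insight, which I would isolate first, is that the right object to expand in the orthonormal row basis is the Hadamard product $v$ of the three fixed rows; after that the computation is immediate.
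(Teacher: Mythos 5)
Your proof is correct, but it is a genuine variant of the paper's argument rather than a reproduction of it. Both proofs run on the same engine --- the identity $H^\top H=nI$ used as a quadratic form, i.e.\ Parseval in the orthogonal row basis --- but they expand different vectors. The paper first normalizes the three fixed rows to a standard form and expands the $0$/$1$ indicator vector $x$ of the $m$ columns on which all three rows agree; the entries of $Hx$ are read off directly from the definition of type, the fixed rows contribute $3m^2$ to $(Hx)^\top(Hx)=nm$, and the lemma appears as $nm-3m^2=m^2$. You instead expand the entrywise product $v=H_a\circ H_b\circ H_c$, which is where your gains come from: $v$ needs no normalization (it is equivalence-invariant), the bridge to types is the algebraic identity $T_{ijk\ell}=(n-P_{ijk\ell})/8$ already recorded in the preliminaries, and the contributions of the rows $a,b,c$ themselves vanish by row orthogonality instead of having to be computed and subtracted. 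The trade-off is that the paper's normalization and column-counting set-up is reused almost verbatim in later proofs (Lemma \ref{ij}, Theorem \ref{16t}), so it earns its keep there, while your version is the cleaner self-contained proof of this particular lemma. The two test vectors are in fact linearly related --- in the paper's normalized frame, $x=\tfrac{1}{4}(H_a+H_b+H_c+v)$ --- which explains why both expansions encode the same count. All your steps check out, including the range $\lfloor n/8\rfloor=\lfloor m/2\rfloor$ and the vanishing of the self-terms.
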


\begin{proof}
Let $n=4m$.
Without loss  of generality, assume that the f{}ixed three rows of $H$ in  the   form
$$\begin{array}{rrrr}
m &m &m &m\\
\p &\p &\p &\p\\
\p & \p & \n & \n\\
\p & \n & \p & \n\\
\end{array}$$
and have been put as  the f{}irst  three  rows of $H$.
Let     $x^\top=(\mathsf{1}_m,  \mathsf{0}_m, \mathsf{0}_m, \mathsf{0}_m)$.   By the def{}inition of type, we deduce that $(Hx)^\top$ is of the form
$$\Big(m, m, m, \pm\big(m-2T_{1234}\big),  \pm\big(m-2T_{1235}\big), \ldots, \pm\big(m-2T_{123n}\big)\Big).$$
Since  $(Hx)^\top(Hx)=x^\top H^\top Hx=nx^\top x=nm$, we obtain that
$$nm=(Hx)^\top(Hx)=3m^2+\sum_{t=0}^{\left\lfloor\frac{m}{2}\right\rfloor}\kappa_t(m-2t)^2,$$ as desired.
\end{proof}

\begin{defi}{\rm
Let $H$ be a  Hadamard matrix of order $n$. For any   triple $\{i, j, k\}$ of the rows of $H$,  denote by  $\kappa_t$  the  number of    rows $\ell\notin\{i, j, k\}$  of $H$     with $T_{ijk\ell}=t$. Let  $\kappa_{t_1}, \ldots, \kappa_{t_r}$ be  the distinct non-zero elements in  $\{\kappa_t \, | \, 0\leqslant t\leqslant n/8\}$. We def{}ine  the   {\sl type}  of $\{i, j, k\}$ as
$$\left(\begin{array}{ccc}   t_1 & \ldots & t_r \\ \kappa_{t_1} &  \ldots   & \kappa_{t_r}\end{array}\right).$$
Also, we def{}ine    the   {\sl prof{}ile}  of $H$ to be  the multiset of the types  of all triples of the rows of $H$.
}\end{defi}

The  prof{}ile   of  Hadamard matrices  can be used in studying Hadamard equivalence, since  two equivalent Hadamard matrices have the same prof{}ile, however,    the inverse  is not true in general, as we will see  in Example \ref{exa13}.

The following result   originally proven in Proposition 2.1 of   \cite{kim} is an easy   consequence   of  Lemma \ref{type}.

\begin{cor}{\sl
Let $n\geqslant8$ and  $H$ be a  Hadamard matrix of order $n$. If there exists a  quadruple $\{i, j, k, \ell\}$ of rows  of  $H$ with $T_{ijk\ell}=0$, then $n\equiv0 \, (\mathrm{mod} \, 8)$.
}\end{cor}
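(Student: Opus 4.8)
The plan is to apply Lemma~\ref{type} to the three rows $i,j,k$ of the given quadruple and to exploit the fact that a vanishing type $t=0$ makes the corresponding summand maximal. Write $n=4m$, and note that $n\geqslant 8$ gives $m\geqslant 2$. Fixing the rows $i,j,k$ and letting $\kappa_t$ be the number of remaining rows of type $t$ with them, Lemma~\ref{type} yields
\begin{equation}\label{eq:key}
\sum_{t=0}^{\lfloor m/2\rfloor}\kappa_t(m-2t)^2=m^2.
\end{equation}
Since $T_{ijk\ell}=0$, the row $\ell$ contributes to $\kappa_0$, so $\kappa_0\geqslant 1$.

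The key observation I would make next is that the $t=0$ summand of \eqref{eq:key} already equals $\kappa_0\,m^2\geqslant m^2$. As every summand on the left-hand side is nonnegative and the total is exactly $m^2$, this forces $\kappa_0=1$ together with $\kappa_t(m-2t)^2=0$ for every $t\geqslant 1$. In other words, apart from $\ell$, a remaining row can have $\kappa_t\neq 0$ only when $m-2t=0$.

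Finally I would bring in the elementary count $\sum_{t}\kappa_t=n-3=4m-3$, valid because exactly $n-3$ rows lie outside $\{i,j,k\}$. Combined with $\kappa_0=1$ this gives $\sum_{t\geqslant 1}\kappa_t=4m-4\geqslant 4>0$, so some index $t\geqslant 1$ has $\kappa_t>0$. By the previous paragraph such a $t$ satisfies $m-2t=0$, i.e.\ $m=2t$ is even, whence $n=4m\equiv 0\pmod 8$. Equivalently, were $m$ odd, then $(m-2t)^2\geqslant 1$ for every $t\geqslant 1$, forcing all $\kappa_t=0$ for $t\geqslant 1$ and contradicting the count.

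The argument is short, and what I expect to be the crux is reading \eqref{eq:key} correctly: not as an identity to be reduced modulo small integers (which turns out to be uninformative, since $(m-2t)^2\equiv m^2\pmod 4$ makes both sides automatically agree), but as an extremal constraint. The single $t=0$ term saturates the right-hand side, and the hypothesis $n\geqslant 8$ supplies enough further rows to force a genuine zero factor $m-2t$, which can occur only when $m$ is even.
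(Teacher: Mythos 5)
Your proof is correct and takes essentially the same approach as the paper: the paper states this corollary without a written-out proof, presenting it as an easy consequence of Lemma \ref{type}, and your argument supplies exactly that derivation. Reading the identity $\sum_t \kappa_t(m-2t)^2=m^2$ as an extremal constraint (the $t=0$ term alone saturates the right-hand side, so the remaining $n-4\geqslant 4$ rows must all have types with $m-2t=0$, forcing $m$ even) is the intended argument.
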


The following result  is a generalization of Lemma 2 of   \cite{khara}.

\begin{cor}\label{1new}{\sl
Let $n\geqslant4$ and  $H$ be a  Hadamard matrix of order $n$.
If  there  exist  three  distinct  rows $i, j, k$ of $H$ such that all quadruples   $\{i, j, k, \ell\}$  of rows     are  of the same type, then $n=4$ or $n=12$.
}\end{cor}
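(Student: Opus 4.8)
The plan is to reduce the hypothesis to a single Diophantine constraint via Lemma~\ref{type} and then finish by an elementary divisibility argument. Write $n = 4m$. Saying that all quadruples $\{i, j, k, \ell\}$ share one common type $t$ means precisely that, in the notation of Lemma~\ref{type}, every one of the $n - 3 = 4m - 3$ rows other than $i, j, k$ is of type $t$ with them; that is, $\kappa_t = 4m - 3$ and $\kappa_{t'} = 0$ for all $t' \neq t$. Feeding this into the identity of Lemma~\ref{type} collapses the sum to its single surviving term, yielding
\[
(4m - 3)(m - 2t)^2 = m^2.
\]

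First I would dispose of the degenerate possibility $m - 2t = 0$: it would force $m^2 = 0$, impossible for $n \geq 4$. Hence $a := m - 2t$ is a positive integer, and the displayed equation shows that $4m - 3$ divides $m^2$. The crux is then to bound $m$. Since $4m - 3$ is odd, it is coprime to $16$, so the condition $4m - 3 \mid m^2$ is equivalent to $4m - 3 \mid 16m^2$. Reducing modulo $4m - 3$ and using $16m^2 = (4m)^2 \equiv 3^2 = 9$, I obtain $4m - 3 \mid 9$. The positive divisors of $9$ give $4m - 3 \in \{1, 3, 9\}$, i.e.\ $m \in \{1, 3\}$ once the non-integral value arising from $4m - 3 = 3$ is discarded. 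This yields exactly $n = 4$ and $n = 12$, as claimed.

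The argument is essentially complete at this point, and I expect no serious obstacle: the only non-routine ingredient is the modular reduction $16m^2 \equiv 9 \pmod{4m - 3}$, which converts an a priori unbounded divisibility into the finite condition $4m - 3 \mid 9$. As a sanity check one may confirm that both surviving cases are genuinely consistent with the type constraints, since for $m = 1$ the equation gives $a = 1$ (so $t = 0$) and for $m = 3$ it gives $a = 1$ (so $t = 1$), both lying in the admissible range $0 \leq t \leq \lfloor m/2 \rfloor$. Establishing the converse existence of such configurations is not required here, as the statement asserts only the necessity of $n \in \{4, 12\}$.
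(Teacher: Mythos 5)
Your proof is correct and matches the paper's own argument essentially step for step: both feed $\kappa_t = n-3$ into Lemma~\ref{type} to get $(n-3)(m-2t)^2 = m^2$, and both then deduce $n-3 \mid 9$ (the paper writes this as $9 = n^2 - (n^2-9)$ with $n-3 \mid n^2$, which is the same modular reduction you perform with $16m^2 \equiv 9 \pmod{4m-3}$). The extra remarks in your write-up (ruling out $m-2t=0$, the final sanity check on admissible $t$) are harmless but not needed, since the divisibility $n-3 \mid m^2$ follows from the displayed equation regardless.
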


\begin{proof}
Let $n=4m$. Assume that  for  three  distinct  rows $i, j, k$ of $H$, four rows    $i, j, k, \ell$    are   of type $t$  for any $\ell\notin\{i, j, k\}$.
By Lemma  \ref{type}, we have $(n-3)(m-2t)^2=m^2$. This means that $n^2$ is divisible by $n-3$. Therefore,  $9=n^2-(n^2-9)$ is divisible by $n-3$ and  we conclude  that $n=4$ or $n=12$.
\end{proof}

\begin{cor}\label{1bad}{\sl
Let $n\geqslant4$ and  $H$ be a  Hadamard matrix of order $n$. If all quadruples of rows are of the same type, then $n=4$ or $n=12$.
}\end{cor}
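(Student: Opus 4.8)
The plan is to reduce Corollary~\ref{1bad} to the already-established Corollary~\ref{1new}. The statement of Corollary~\ref{1bad} is strictly weaker in hypothesis than what Corollary~\ref{1new} requires: Corollary~\ref{1bad} assumes that \emph{every} quadruple of rows of $H$ has the same type, while Corollary~\ref{1new} only needs that \emph{some} fixed triple $i,j,k$ has the property that all quadruples $\{i,j,k,\ell\}$ share a common type. So the first and essentially only step is to observe that the global hypothesis immediately supplies the local one.

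Concretely, I would argue as follows. Suppose $H$ is a Hadamard matrix of order $n\geqslant 4$ in which all quadruples of rows are of the same type, say type $t$. Since $n\geqslant 4$, the matrix has at least three rows; pick any three distinct rows $i,j,k$ of $H$ (this is possible because $n\geqslant 4\geqslant 3$). Then for every row $\ell\notin\{i,j,k\}$, the quadruple $\{i,j,k,\ell\}$ is a quadruple of rows of $H$, hence by hypothesis $T_{ijk\ell}=t$. Thus all quadruples containing the fixed triple $\{i,j,k\}$ are of the same type, which is exactly the hypothesis of Corollary~\ref{1new}. Applying Corollary~\ref{1new} yields $n=4$ or $n=12$, as claimed.

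There is no real obstacle here; the content of the result lives entirely in Corollary~\ref{1new} (and ultimately in Lemma~\ref{type}), and Corollary~\ref{1bad} is a one-line specialization. The only point worth checking is the degenerate case $n=4$: then there are exactly four rows, there is a unique triple, and the single quadruple is trivially "all of the same type," so the hypothesis is vacuously satisfied and the conclusion $n=4$ holds consistently. For $n\geqslant 8$ the reduction to Corollary~\ref{1new} goes through verbatim, and no further case analysis is needed.
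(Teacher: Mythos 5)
Your proposal is correct and matches the paper's intent exactly: the paper states Corollary~\ref{1bad} without a separate proof precisely because it is the immediate specialization of Corollary~\ref{1new} that you describe (any fixed triple inherits the uniform type from the global hypothesis). Your extra remark about the case $n=4$ is harmless but unnecessary, since Corollary~\ref{1new} already covers all $n\geqslant 4$.
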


\section{Hadamard matrices with two  distinct types}

In this section, we investigate Hadamard matrices whose types of quadruples  of   rows take   few   distinct  values. By Corollary \ref{1bad}, any Hadamard matrix of order larger than $12$ has at least two distinct types. Thus, it is   natural to  ask about    Hadamard matrices with exactly two  distinct types.  We expect such  matrices to be  very rare and    structurally   nice. The complete classif{}ication of theses objects  seems   dif{}f{}icult.   We here obtain   some partial results. In particular, we examine the Hadamard matrices of order $n$ having  types $\alpha$ and $\beta$ for any quadruple of rows with   $(\alpha, \beta)\in\{(0, \tfrac{n}{8}), (1,  \tfrac{n-4}{8}), (\tfrac{n}{16},  \tfrac{n}{8})\}$. Note that theses pairs of types satisfy  the equation given in Lemma \ref{type}.

The following lemma is useful in eliminating  some possible  solutions of the equation stated  in Lemma \ref{type}.

\begin{lem}\label{ij}{\sl
Let $i, j, k, p, q$ be f{}ive distinct  rows of a Hadamard matrix of order $4m$. Then $T_{ijkp}+T_{ijkq}\geqslant m/2$.
Moreover, if  the equality occurs, then  these f{}ive  rows can be written as
\begin{equation}\label{=}\begin{array}{rrrrrrrrrrrrrrrr}
&   &  \frac{m}{2} &  t'  &  s'  &  t  & s   & \frac{m}{2}   &  t  & s  &  \frac{m}{2}  &   \frac{m}{2}  &  t'  & s' \\
i & : & \p &  \p  & \p    &    \p  & \p  & \p     & \p  & \p   & \p &  \p  & \p   & \p  \\
j & : & \p &  \p  & \p   &  \p  & \p  & \p   & \n &  \n  & \n    &  \n & \n & \n   \\
k & : & \p &  \p  & \p       &  \n & \n & \n     &  \p  & \p  & \p   & \n &  \n  & \n     \\
p & : & \p &  \p  &\n      &  \p  & \n  & \n     &  \p  & \n  & \n   & \p &  \p  & \n    \\
q & : & \p &  \n  & \p   &  \n  & \p  & \n    &  \n  & \p  &\n   & \p &  \n  & \p
\end{array}\end{equation}
where $t=m/2-t'=T_{ijkp}$ and $s=m/2-s'=T_{ijkq}$.
}\end{lem}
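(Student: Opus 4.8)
The plan is to fix the three rows $i,j,k$ in the standard form used in the proof of Lemma \ref{type}, so that after suitable column negations and permutations the columns split into four blocks $A,B,C,D$, each of size $m$, on which $(i,j,k)$ equals $(\p,\p,\p)$, $(\p,\p,\n)$, $(\p,\n,\p)$, $(\p,\n,\n)$ respectively. First I would record how a single extra row distributes over these blocks. Writing $b$ for the number of $\p$'s of $p$ on block $B$, orthogonality of $p$ to each of $i,j,k$ forces the plus-counts of $p$ on $(A,B,C,D)$ to be $(m-b,\,b,\,b,\,m-b)$; since $h_ih_jh_k$ equals $\p$ on $A,D$ and $\n$ on $B,C$, the identity $T_{ijkp}=\tfrac{n-P_{ijkp}}{8}$ gives $T_{ijkp}=\min(b,m-b)$. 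The same holds for $q$ with a parameter $b'$. As negating a row leaves all types unchanged and sends $b\mapsto m-b$, I may assume $b,b'\leqslant m/2$, so that $T_{ijkp}=b$, $T_{ijkq}=b'$, and it suffices to prove $b+b'\geqslant m/2$.

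The heart of the argument is one further orthogonality relation, that of $p$ with $q$. Let $\pi_X$ be the number of columns of block $X$ on which both $p$ and $q$ equal $\p$. Since $p$ and $q$ each have exactly $2m$ entries equal to $\p$ and are orthogonal, a short inner-product count (in the style of Lemma \ref{type}) gives $\pi_A+\pi_B+\pi_C+\pi_D=m$. On the other hand, inclusion–exclusion inside a block of size $m$ yields $\pi_X\geqslant(\text{number of }\p\text{'s of }p\text{ on }X)+(\text{number of }\p\text{'s of }q\text{ on }X)-m$, which on blocks $A$ and $D$ reads $\pi_A,\pi_D\geqslant m-b-b'\geqslant0$. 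Combining, $m=\pi_A+\pi_B+\pi_C+\pi_D\geqslant\pi_A+\pi_D\geqslant2(m-b-b')$, whence $b+b'\geqslant m/2$, proving the inequality.

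For the equality case I would chase when every estimate above is tight. The equality $b+b'=m/2$ forces $\pi_B=\pi_C=0$ and $\pi_A=\pi_D=m/2$; the latter is the minimum permitted by inclusion–exclusion, so on $A$ (resp.\ $D$) the $\p$-sets of $p$ and $q$ cover the entire block, i.e.\ no column of $A$ or $D$ has both $p=\n$ and $q=\n$, while $\pi_B=\pi_C=0$ says no column of $B$ or $C$ has both $p=\p$ and $q=\p$. This pins down the size of every one of the sixteen $(i,j,k,p,q)$-sign blocks: the four patterns $(\p,\p,\n,\n)$, $(\p,\n,\p,\p)$, $(\n,\p,\p,\p)$, $(\n,\n,\n,\n)$ of $(j,k,p,q)$ become empty, and the remaining twelve acquire the sizes $\tfrac m2,\,t',\,s',\,t,\,s,\,\tfrac m2,\,t,\,s,\,\tfrac m2,\,\tfrac m2,\,t',\,s'$ with $t=T_{ijkp}$, $s=T_{ijkq}$, $t'=\tfrac m2-t$ and $s'=\tfrac m2-s$. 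Re-sorting the columns into these twelve blocks produces exactly the displayed form \eqref{=}.

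I expect the genuinely delicate point to be this last translation: converting the extremal intersection data $\pi_A=\pi_D=m/2$, $\pi_B=\pi_C=0$ into the full twelve-block matrix, that is, checking that precisely those four sign patterns vanish and that each surviving block has the stated size. The inequality itself is a one-line consequence of orthogonality once the right quantity — the number of common $\p$-columns of $p$ and $q$, together with its blockwise lower bounds — has been isolated.
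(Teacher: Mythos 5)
Your proof is correct and follows essentially the same route as the paper's: both normalize $i,j,k$ into four blocks of size $m$, reduce everything to the sixteen joint sign-pattern counts of $p,q$, use orthogonality of $p$ and $q$ to get $\pi_A+\pi_B+\pi_C+\pi_D=m$ (the paper's $a_1+b_1+c_1+d_1=m$), and bound via nonnegativity of the ``both $\n$'' counts on blocks $A$ and $D$ (your inclusion--exclusion inequality is exactly the paper's $a_4,d_4\geqslant0$). The only difference is bookkeeping: you track plus-counts and common-plus counts, while the paper solves the full linear system \eqref{para} for all sixteen block sizes; the equality analysis then pins down the same twelve nonempty blocks in both arguments.
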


\begin{proof}
Without loss of generality, we may assume that
$$\begin{array}{llllllllllllllllllll}   &   &  a_1 &  a_2 & a_3  & a_4 & b_1  &   b_2 & b_3 & b_4 & c_1  &   c_2  & c_3  & c_4 &  d_1 &  d_2 & d_3  & d_4 \\
i & : & \p &  \p  & \p  & \p  &  \p &  \p  & \p  & \p   & \p &  \p  & \p  & \p   & \p &  \p  & \p  & \p   \\
j & : & \p &  \p  & \p  & \p  &  \p &  \p  & \p  & \p   & \n &  \n  & \n  & \n   & \n &  \n & \n & \n   \\
k & : & \p &  \p  & \p  & \p  &  \n &  \n & \n & \n   & \p &  \p  & \p  & \p   & \n &  \n  & \n  & \n   \\
p & : & \p &  \p  &\n  & \n  &  \p &  \p  & \n  & \n   & \p &  \p  & \n  & \n   & \p &  \p  & \n  & \n  \\
q & : & \p &  \n  & \p  & \n &  \p &  \n  & \p  & \n   & \p &  \n  & \p  &\n   & \p &  \n  & \p  & \n\cdot
\end{array}$$
By the def{}inition of type and since $T_{ijkp}=t$ and $T_{ijkq}=s$, we have
\begin{eqnarray*}
\begin{array}{ll}
a_3+a_4=b_1+b_2=c_1+c_2=d_3+d_4=t,
\\ \vspace{-2mm} \\
a_2+a_4=b_1+b_3=c_1+c_3=d_2+d_4=s,
\end{array}
\end{eqnarray*}
and
$$a_1+a_2+a_3+a_4=b_1+b_2+b_3+b_4=c_1+c_2+c_3+c_4=d_1+d_2+d_3+d_4=m.$$
Solving  the equations   above, we obtain that
\begin{equation}\label{para}
\begin{array}{lll}
\left\{\begin{array}{ll} a_2=m-t-a_1  \\ a_3=m-s-a_1  \\ a_4=t+s-m+a_1, \end{array}\right.  & \hspace{1cm}  &
\left\{\begin{array}{ll} b_2=t-b_1\\  b_3=s-b_1 \\  b_4=m-t-s+b_1, \end{array}\right. \\  & \hspace{1cm}  & \\
\left\{\begin{array}{ll} c_2=t-c_1 \\   c_3=s-c_1 \\    c_4=m-t-s+c_1, \end{array}\right.  & \hspace{1cm}  &
\left\{\begin{array}{ll}  d_2=m-t-d_1 \\   d_3=m-s-d_1 \\  d_4=t+s-m+d_1. \end{array}\right.
\end{array}
\end{equation}
The inner product of two rows $p$ and $q$ is equal to $4(a_1+b_1+c_1+d_1-m)$. So  the orthogonality of   rows $p$ and $q$ implies that  $a_1+b_1+c_1+d_1=m$.
Since $a_4, d_4\geqslant0$, we deduce that both  $a_1$ and $d_1$ are at least $m-t-s$. Therefore, $m\geqslant a_1+d_1\geqslant2(m-t-s)$ and so   $t+s\geqslant m/2$, as desired.

If $t+s=m/2$, then $a_1+d_1=m$. As mentioned above,   since $a_1$ and $d_1$ are at least $m-t-s$, we conclude that  $a_1=d_1=m/2$. By $a_1+b_1+c_1+d_1=m$, we f{}ind  that $b_1=c_1=0$. Now, the result follows from \eqref{para}.
\end{proof}

\begin{thm}\label{16t}{\sl
There exists no    Hadamard matrix of order $16t$ whose  all quadruples of rows are of type $t$ or $2t$.
}\end{thm}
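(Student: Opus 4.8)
The plan is to show that, under the stated hypothesis, the four rows forming a type-$t$ quadruple with any fixed triple are forced to be multiplicatively dependent; this manufactures a quadruple of type $0$, which the hypothesis forbids.

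First I would read off the numerics from Lemma~\ref{type}. Writing the order as $n=16t$, so that $m=4t$, the only types that can occur are $t$ and $2t$, with $(m-2t)^2=4t^2$ and $(m-4t)^2=0$ respectively. Hence for every triple $\{i,j,k\}$ the identity of Lemma~\ref{type} reduces to $\kappa_t(m-2t)^2=m^2$, that is $4t^2\,\kappa_t=16t^2$, so $\kappa_t=4$: there are exactly four rows $\ell_1,\ell_2,\ell_3,\ell_4$ of type $t$ with $\{i,j,k\}$, every other row being of type $2t$. Next I would introduce the $\{-1,1\}$ vector $v$ with $r$-th coordinate $v_r=h_{ir}h_{jr}h_{kr}$ and expand it in the orthogonal basis of rows of $H$ via $v=\tfrac1n\sum_u\langle v,h_u\rangle\,h_u$. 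Here $\langle v,h_u\rangle=0$ for $u\in\{i,j,k\}$ (it is the inner product of two distinct rows), while for $u\notin\{i,j,k\}$ it equals $\pm P_{ijku}$, which by the previous step is nonzero exactly for $u\in\{\ell_1,\ell_2,\ell_3,\ell_4\}$ and there has absolute value $n-8t=8t$. Dividing by $n=16t$ yields coefficients $\pm\tfrac12$, so $v=\tfrac12\sum_{a=1}^{4}\varepsilon_a\,h_{\ell_a}$ for some signs $\varepsilon_a\in\{-1,1\}$.

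The crux is a one-line parity argument in each column. Evaluating the last identity at a column $r$ gives $\sum_{a=1}^4\varepsilon_a h_{\ell_a r}=2v_r=\pm2$; four numbers from $\{-1,1\}$ can sum to $\pm2$ only when exactly three coincide and one dissents, so in every column precisely one of the terms $\varepsilon_a h_{\ell_a r}$ carries the minority sign. Therefore $\prod_{a=1}^4\varepsilon_a h_{\ell_a r}=-1$ for all $r$, whence $\prod_{a=1}^4 h_{\ell_a r}$ is independent of $r$. This forces $\big|\sum_r h_{\ell_1 r}h_{\ell_2 r}h_{\ell_3 r}h_{\ell_4 r}\big|=n$, i.e.\ $P_{\ell_1\ell_2\ell_3\ell_4}=n$ and hence $T_{\ell_1\ell_2\ell_3\ell_4}=0$. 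Since $t\geqslant1$, the value $0$ is neither $t$ nor $2t$, contradicting the assumption that all quadruples have type $t$ or $2t$.

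I expect no serious obstacle beyond isolating the right object: the argument hinges on recognizing that the four partners of a triple must multiply columnwise to a constant, so the difficulty is conceptual (finding this step) rather than computational. The only points needing care are the bookkeeping in the basis expansion — in particular that the four coefficients are exactly $\pm\tfrac12$ and are supported precisely on the partners — together with the trivial observations that $\ell_1,\dots,\ell_4$ are four distinct rows and that $t\geqslant1$ genuinely excludes type $0$.
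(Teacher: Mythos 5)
Your proof is correct, and it reaches the paper's contradiction by a genuinely different route. Both arguments start identically: Lemma~\ref{type} with $m=4t$ forces $\kappa_t=4$, so every triple has exactly four type-$t$ partners, and both arguments end by showing that these four partners form a quadruple of type $0$. But the middle differs. The paper invokes the equality case of Lemma~\ref{ij} for each pair of partners, assembles (by a somewhat laborious ``straightforward to check'' step) the unique $7\times 16t$ configuration extending the fixed triple, and reads off $P_{4567}=n$ from that explicit array. You instead expand the columnwise product $v=h_i\circ h_j\circ h_k$ in the orthogonal basis of rows: the coefficients $\tfrac1n\langle v,h_u\rangle=\pm\tfrac1n P_{ijku}$ vanish for $u\in\{i,j,k\}$ and for type-$2t$ partners (since $P=n-16t=0$), and equal $\pm\tfrac12$ on the four type-$t$ partners (since $P=8t$), so $2v_r=\sum_a\varepsilon_a h_{\ell_a r}=\pm2$ in every column; the parity observation that four signs summing to $\pm2$ always have product $-1$ then makes $\prod_a h_{\ell_a r}$ constant in $r$, i.e.\ $P_{\ell_1\ell_2\ell_3\ell_4}=n$ and $T_{\ell_1\ell_2\ell_3\ell_4}=0$, which contradicts $t\geqslant1$. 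Your route is shorter and conceptually cleaner: it bypasses Lemma~\ref{ij} and all configuration bookkeeping, replacing them with one basis expansion and one parity count, and it isolates the structural reason behind the contradiction (the partners' columnwise product is forced to be constant). What the paper's approach buys in exchange is the explicit normal form of the seven rows, which displays the obstruction concretely and reuses Lemma~\ref{ij}, a tool the paper needs elsewhere anyway (e.g.\ in Theorem~\ref{full}).
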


\begin{proof}
By contradiction, assume that there exists a  Hadamard matrix $H$  of order $n=16t$ whose  all quadruples of rows are of type $t$ or $2t$.
Let $\kappa_t$ and $\kappa_{2t}$ be the number of  rows which respectively are of type $t$ and $2t$  with the f{}irst    three rows.
By applying Lemma \ref{type}, we f{}ind that $\kappa_t=4$ and  $\kappa_{2t}=n-7$.
Without loss of generality, we may assume that $T_{1234}=T_{1235}=T_{1236}=T_{1237}=t$.
For any pair   $p, q\in\{4, 5, 6, 7\}$, since the equality holds  in   Lemma \ref{ij},   f{}ive rows $1, 2, 3, p, q$ can be written as  \eqref{=}. Thus, it is straightforward to check that  we   necessarily have   the following conf{}iguration:
$$\begin{array}{llllllllllllllllllll}   &   &  t &  t  &  t  &  t  & t   & t   & t  & t   &  t  &  t  &  t  & t&  t  &t  &t  &t   \\
1 & : & \p& \p &  \p  & \p    &    \p  & \p & \p & \p     & \p  & \p   & \p &  \p   & \p &  \p & \p   & \p  \\
2 & : & \p& \p &  \p  & \p   &  \p  & \p & \p & \p   & \n &  \n  & \n    &  \n & \n    &  \n & \n & \n   \\
3 & : & \p& \p &  \p  & \p       &  \n & \n & \n & \n     &  \p  & \p  & \p    & \p  & \n & \n   & \n &  \n       \\
4 & : & \p& \p &  \p  &\n      &  \p  & \n  & \n & \n     &  \p  & \n  & \n & \n & \p& \p & \p   & \n    \\
5 & : & \p& \p &  \n  & \p   &  \n  & \p  & \n & \n    &  \n  & \p  &\n &\n  & \p & \p &  \n  & \p \\
6 & : & \p& \n &  \p  & \p   &  \n  & \n  & \p & \n    &  \n  & \n  &\p &\n  & \p & \n &  \p  & \p \\
7 & : & \n & \p &  \p  & \p   &  \n  & \n  & \n& \p    &  \n  & \n  &\n  &\p  & \n & \p &  \p  & \p\cdot
\end{array}$$
It turns  out that $P_{4567}=n$ and so   $T_{4567}=0$, a contradiction.
\end{proof}

It  has been  shown in \cite{hal}  that there are exactly f{}ive equivalence classes of Hadamard matrices of order $16$. We   prove the following result without any reference to these      equivalence classes.

\begin{cor}{\sl
Every     Hadamard matrix of order $16$ has  four rows of  type $0$.
}\end{cor}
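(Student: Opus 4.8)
The plan is to obtain this as an immediate consequence of Theorem \ref{16t}. First I would record the crucial numerical fact for order $16$: here $m = n/4 = 4$, and by the discussion following the definition of type, every quadruple of rows has a type $t$ that is an integer satisfying $0 \leqslant t \leqslant \lfloor n/8 \rfloor = 2$. Hence the only values a type can take in this order are $0$, $1$, and $2$.

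Next I would argue by contradiction. Suppose a Hadamard matrix $H$ of order $16$ had no four rows of type $0$. Then, by the previous step, every quadruple of rows of $H$ would necessarily be of type $1$ or of type $2$. But this is precisely the configuration forbidden by Theorem \ref{16t} specialized to $t = 1$: the order is $16t = 16$, and the two permitted types there are exactly $t = 1$ and $2t = 2$. This contradiction forces $H$ to contain a quadruple $\{i, j, k, \ell\}$ with $T_{ijk\ell} = 0$, that is, four rows of type $0$.

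I do not expect any genuine obstacle at this stage, since all the substantive work has already been carried out upstream: Theorem \ref{16t} rules out matrices all of whose quadruples have type $t$ or $2t$ via the explicit configuration argument built on the equality case of Lemma \ref{ij}. The single point that still needs checking is that a quadruple in order $16$ cannot have type exceeding $2$, and this follows directly from the bound $0 \leqslant t \leqslant \lfloor n/8 \rfloor$ recorded in the definition of type. Thus the corollary reduces to combining this elementary range restriction with the already-established theorem.
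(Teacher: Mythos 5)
Your proof is correct and follows essentially the same route as the paper: both reduce the corollary to Theorem \ref{16t} with $t=1$ after observing that in order $16$ the only possible types are $0$, $1$, and $2$. The only cosmetic difference is that you get this restriction directly from the definitional bound $0\leqslant t\leqslant\lfloor n/8\rfloor$, whereas the paper derives the (stronger) list of admissible triple types from Lemma \ref{type}; either justification suffices.
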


\begin{proof}
Lemma \ref{type} yields that  each  triple of rows of a  Hadamard matrix of order $16$ is of type
$$\left(\begin{array}{cc}   0  & 2 \\  1    & 12 \end{array}\right) \quad \text{ or } \quad  \left(\begin{array}{cc}   1  & 2 \\  4    & 9 \end{array}\right).$$
Now,    the result follows from Theorem \ref{16t}.
\end{proof}

Recall   that the {\sl Hadamard   product} of two  $(-1, 1)$-vectors $a=(a_1, \ldots, a_n)$ and $b=(b_1, \ldots, b_n)$ is def{}ined    as   $a\circ b=(a_1b_1, \ldots, a_nb_n)$.
We also  def{}ine $\sigma(a)=|a_1+\cdots+a_n|$. It is not hard to check  that
\begin{equation}\label{norm}
\sigma(a\circ b)\geqslant\sigma(a)+\sigma(b)-n.
\end{equation}

Roughly specking, the following theorem states that there is no     large gap   between the  types of  quadruples of   rows of a Hadamard matrix whose order is not a power of $2$.

\begin{thm}\label{full}{\sl
Let $H$ be a Hadamard matrix   of order $n$ and let $r<n/16$.   Suppose that for every three distinct  rows $i, j, k$ of $H$,  there exists   a   row  $\ell$ with    $T_{ijk\ell}\leqslant r$    and   no row $x$ with   $r<T_{ijkx}\leqslant2r$. Then $n$ must be  a power of $2$.
}\end{thm}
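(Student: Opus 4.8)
The plan is to show that the two hypotheses force the rows of $H$ to carry the structure of an elementary abelian $2$-group, whence $n$ is a power of $2$. Write $n=4m$, let $h_1,\ldots,h_n\in\{-1,1\}^n$ be the rows of $H$, and call a quadruple of rows \emph{small} if its type is at most $r$; note that $r<n/16$ gives $2r<m/2$, and recall $P_{ijk\ell}=\sigma(h_i\circ h_j\circ h_k\circ h_\ell)$. First I would check that every triple of rows lies in a unique small quadruple. Existence is exactly the first hypothesis. For uniqueness, if a triple $\{i,j,k\}$ had two small completions $\ell_1\neq\ell_2$, then Lemma \ref{ij} applied to the five rows $i,j,k,\ell_1,\ell_2$ would give $T_{ijk\ell_1}+T_{ijk\ell_2}\geqslant m/2$, contradicting $T_{ijk\ell_1}+T_{ijk\ell_2}\leqslant2r<m/2$. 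Since the type is symmetric in its four indices, being small is a property of the $4$-set, and each of its four triples has this very set as its unique completion. Hence the small quadruples form a Steiner system $S(3,4,n)$; I shall call them \emph{blocks}.

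The crucial step is a closure property extracted from \eqref{norm}. Let $B_1=\{a,b,c,d\}$ and $B_2=\{a,b,e,f\}$ be two distinct blocks sharing the pair $\{a,b\}$ (two distinct blocks of an $S(3,4,n)$ meet in at most two points). The common factors cancel in the Hadamard product, $(h_a\circ h_b\circ h_c\circ h_d)\circ(h_a\circ h_b\circ h_e\circ h_f)=h_c\circ h_d\circ h_e\circ h_f$, so \eqref{norm} yields $P_{cdef}\geqslant P_{abcd}+P_{abef}-n\geqslant 2(n-8r)-n=n-16r$, that is $T_{cdef}\leqslant2r$. Because no type lies in $(r,2r]$, this forces $T_{cdef}\leqslant r$, so the symmetric difference $B_1\triangle B_2=\{c,d,e,f\}$ is again a block. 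Thus the block system is closed under symmetric differences of blocks meeting in two points.

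Finally I would convert this closure into a group law. Fix a row $o$ and, for distinct rows $x,y$ different from $o$, let $x\oplus y$ be the unique fourth point of the block through $\{o,x,y\}$; set $o\oplus x=x\oplus o=x$ and $x\oplus x=o$. This operation on the set $V$ of rows is commutative, has identity $o$, and satisfies $x\oplus x=o$, so only associativity remains. For generic $x,y,z$, putting $p=x\oplus y$ and $q=y\oplus z$, the blocks $\{o,x,y,p\}$ and $\{o,y,z,q\}$ meet exactly in $\{o,y\}$, so by the closure property $\{x,p,z,q\}$ is a block; intersecting it in $\{p,z\}$ with the block $\{o,p,z,w\}$ through $\{o,p,z\}$, where $w=p\oplus z=(x\oplus y)\oplus z$, shows $\{o,x,q,w\}$ is a block, and uniqueness of completions gives $w=x\oplus q=x\oplus(y\oplus z)$. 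Hence $(V,\oplus)$ is an abelian group in which every element has order dividing $2$, i.e.\ an elementary abelian $2$-group, and therefore $n=|V|$ is a power of $2$.

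I expect the main obstacle to be this last step: promoting the purely combinatorial closure to an honest group law, in particular proving associativity cleanly and disposing of the finitely many degenerate configurations (coincidences among $x,y,z,p,q,w$). The subtlety is that the blocks produced by \eqref{norm} need not have type exactly $0$, so the rows of $H$ are \emph{not} literally closed under the Hadamard product, and the group structure must be read off from the incidence data of the Steiner system rather than from an algebraic identity among the rows.
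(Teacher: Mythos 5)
Your proof is correct, and it takes a genuinely different route from the paper's. Both arguments start from the same two pillars: uniqueness of the small completion of each triple via Lemma \ref{ij}, and the combination of \eqref{norm} with the gap hypothesis (``$\leqslant 2r$ forces $\leqslant r$'') as the engine. But the paper never builds a global group structure. Instead it calls a set $\mathcal{S}$ of rows \emph{full} if the unique small completion of every triple from $\mathcal{S}$ lies in $\mathcal{S}$, notes that a full set of size $4$ exists, and proves that every full set $\{a_1,\ldots,a_s\}$ with $s<n$ extends to a full set of size $2s$ by adjoining, for an outside row $b_1$, the completions $b_i$ of the triples $\{a_1,a_i,b_1\}$; since full sets keep doubling until they exhaust all rows, $n$ is a power of $2$. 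The fullness checks in the paper (its Cases 1 and 2) are precisely instances of your symmetric-difference closure lemma, carried out as $\sigma$-computations with auxiliary vectors $\beta_i$: namely $\{a_1,a_i,b_1,b_i\}\triangle\{a_1,a_j,b_1,b_j\}=\{a_i,a_j,b_i,b_j\}$, then $\{a_i,a_j,a_k,a_\ell\}\triangle\{a_k,a_\ell,b_k,b_\ell\}=\{a_i,a_j,b_k,b_\ell\}$ and $\{a_i,a_j,b_k,b_\ell\}\triangle\{a_i,a_j,b_i,b_j\}=\{b_i,b_j,b_k,b_\ell\}$. What your version buys is the sharper structural conclusion (the rows carry an elementary abelian $2$-group structure read off the Steiner system $S(3,4,n)$, and the paper's full sets containing your identity row $o$ are exactly its subgroups, the doubling step being extension by a coset); what the paper's version buys is that it never has to verify a group law: doubling needs only the closure computations, not associativity. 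Finally, the obstacle you flag is not a real one. The degenerate cases of associativity all reduce to commutativity plus the cancellation rule $x\oplus(x\oplus z)=z$, which is immediate from uniqueness of the block through $\{o,x,z\}$; in the remaining configuration the only possible coincidences are $p=z$ (equivalently $q=x$, both meaning $\{o,x,y,z\}$ is a block, in which case both sides of the associative law equal $o$) and $p=q$ (impossible, since it forces $x=z$); and in your final step $w=x$ or $w=q$ would put two distinct blocks through a common triple, contradicting uniqueness. So your argument does complete as written.
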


\begin{proof} By Lemma \ref{ij}, for every three distinct  rows $i, j, k$ of $H$,  there exists   a  unique   row  $\ell$ with    $T_{ijk\ell}\leqslant r$. We  say a set $\mathcal{S}$   of rows  of $H$  to be `full'  if for  every distinct  rows   $i, j, k\in\mathcal{S}$,  the   unique   row  $\ell$ with    $T_{ijk\ell}\leqslant r$ is   contained in $\mathcal{S}$. Trivially, $H$ has      a full  set of size $4$. We claim   that any  full  set of size $s<n$  can be extended to   a full  set of size $2s$. Clearly, the claim  concludes  the assertion of the theorem.

Suppose that   $\mathcal{S}=\{a_1, \ldots, a_s\}$   is a full set in $H$. Choose   an arbitrary  row $b_1$ in $H$  outside of $\mathcal{S}$ and,   for $i=2, \ldots, s$, let $b_i$  be the unique  row in $H$ such that     $T_{a_1a_ib_1b_i}\leqslant r$. For any $i\geqslant2$, we may write $b_i=a_1\circ a_i\circ b_1\circ \beta_i$  for a suitable $(-1, 1)$-vector $\beta_i$. Note that $\sigma(\beta_i)=\sigma(a_1\circ a_i\circ b_1\circ b_i)=n-8T_{a_1a_ib_1b_i}\geqslant n-8r$.
Since $\mathcal{S}$ is a full set and $b_1$ is not in $\mathcal{S}$, so are  $b_2,  \ldots, b_s$. If $b_i=b_j$, then  $T_{a_1b_1b_i\ell}\leqslant r$ for $\ell=a_i$ and $\ell=a_j$, a contradiction.  So, $\mathcal{S}'=\mathcal{S}\cup\{b_1, \ldots, b_s\}$ is of size $2s$. Now, we prove that $\mathcal{S}'$ is full.   It clearly suf{}f{}ices  to consider only  the  following two cases:

\noindent{\bf{\textsf{Case 1.}}} For every  $2\leqslant i<j\leqslant s$,    we  show     $T_{a_ia_jb_ib_j}\leqslant r$ and $\sigma(\beta_i\circ\beta_j)\geqslant n-8r$.

From \eqref{norm}, we have
\begin{align}\label{bet}
n-8T_{a_ia_jb_ib_j}&=\sigma(a_i\circ a_j\circ  b_i\circ  b_j)=\sigma(\beta_i\circ\beta_j)\\\nonumber&\geqslant\sigma(\beta_i)+\sigma(\beta_j)-n\geqslant n-16r
\end{align}
and so $T_{a_ia_jb_ib_j}\leqslant2r$. By the assumption   of the theorem, we have   $T_{a_ia_jb_ib_j}\leqslant r$.  The second inequality   follows from \eqref{bet}.

\noindent{\bf{\textsf{Case  2.}}} For any  quadruple $\{a_i, a_j, a_k, a_\ell\}$  of type at most $r$, we  show   $T_{a_ia_jb_kb_\ell}\leqslant r$  and $T_{b_ib_jb_kb_\ell}\leqslant r$.

From \eqref{norm}, we have
\begin{align}\label{sbet}
n-8T_{a_ia_jb_kb_\ell}&=\sigma(a_i\circ a_j\circ  b_k\circ  b_\ell)=\sigma(a_i\circ a_j\circ  a_k\circ  a_\ell\circ\beta_i\circ\beta_j)\\\nonumber&\geqslant\sigma(a_i\circ a_j\circ  a_k\circ  a_\ell)+\sigma(\beta_i\circ\beta_j)-n\geqslant n-16r
\end{align}
and so $T_{a_ia_jb_kb_\ell}\leqslant2r$. The assumption   of the theorem  results in  $T_{a_ia_jb_kb_\ell}\leqslant r$. By \eqref{sbet}, we obtain that $\sigma(a_i\circ a_j\circ  a_k\circ  a_\ell\circ\beta_i\circ\beta_j)\geqslant n-8r$ for any  quadruple $\{a_i, a_j, a_k, a_\ell\}$  of type at most $r$. This  along with  the second inequality in Case 1 give
\begin{align*}
n-8T_{b_ib_jb_kb_\ell}&=\sigma(b_i\circ b_j\circ  b_k\circ  b_\ell)=\sigma(a_i\circ a_j\circ  a_k\circ  a_\ell\circ\beta_i\circ\beta_j\circ\beta_k\circ\beta_\ell)\\&\geqslant\sigma(a_i\circ a_j\circ  a_k\circ  a_\ell\circ\beta_i\circ\beta_j)+\sigma(\beta_k\circ\beta_\ell)-n\geqslant n-16r,
\end{align*}
implying $T_{b_ib_jb_kb_\ell}\leqslant2r$. By the assumption   of the theorem,     $T_{b_ib_jb_kb_\ell}\leqslant r$  which completes  the proof.
\end{proof}

The following consequence  immediately follows from Theorem \ref{full}.

\begin{cor}\label{4h}{\sl
Let $H$ be a Hadamard matrix   of order $n$ such that  for every three distinct  rows $i, j, k$ of $H$,  there exists a    row $\ell$   with    $T_{ijk\ell}<n/24$.  Then $n$ is  a power of $2$.
}\end{cor}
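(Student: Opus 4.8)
The plan is to derive Corollary \ref{4h} as a direct specialization of Theorem \ref{full}. The key observation is that the hypothesis of Corollary \ref{4h} guarantees, for every triple of distinct rows $i, j, k$, the existence of a row $\ell$ with $T_{ijk\ell} < n/24$; I want to set the threshold parameter $r$ from Theorem \ref{full} so that this hypothesis implies both conditions required by that theorem. The natural choice is to take $r$ to be the largest integer strictly less than $n/24$, or more cleanly to work with the value $r = \lfloor (n-4)/24 \rfloor$ or simply $r$ close to $n/24$ from below; the essential point is that $3r < n/8$, equivalently $2r < n/16 + \varepsilon$, so that the gap interval $(r, 2r]$ is forced to be empty by an arithmetic constraint rather than by assumption.

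The main idea is the following. Fix any triple $i, j, k$ and recall from Lemma \ref{ij} that for any two further rows $p, q$ we have $T_{ijkp} + T_{ijkq} \geqslant m/2 = n/8$. Consequently, if one row $\ell$ satisfies $T_{ijk\ell} < n/24$, then every other row $x$ must satisfy $T_{ijkx} \geqslant n/8 - T_{ijk\ell} > n/8 - n/24 = n/12$. Thus no row $x$ can have its type lying in the interval $(r, 2r]$ provided $2r \leqslant n/12$, i.e. provided $r \leqslant n/24$. First I would therefore verify that choosing $r$ to be any value satisfying $T_{ijk\ell} \leqslant r < n/24$ and $2r \leqslant n/12$ simultaneously meets both hypotheses of Theorem \ref{full}: the existence of a small-type row $\ell$ is immediate, and the emptiness of the gap $(r, 2r]$ follows from the Lemma \ref{ij} bound just described. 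Since $r < n/24 < n/16$, the constraint $r < n/16$ of Theorem \ref{full} is automatically satisfied as well.

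To make the selection of $r$ rigorous I would set $r$ equal to the maximum over all triples of the minimal type value $\min_\ell T_{ijk\ell}$, which by hypothesis is strictly below $n/24$; since types are integers this maximum is an integer $r$ with $r < n/24$, and then $2r < n/12$, so the interval $(r, 2r]$ contains no integer value attainable as a type by the Lemma \ref{ij} argument above. Applying Theorem \ref{full} with this $r$ yields that $n$ is a power of $2$, which is exactly the conclusion.

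The step I expect to require the most care is confirming that a single value of $r$ works uniformly across all triples, since Theorem \ref{full} demands one fixed $r$ for the whole matrix rather than a triple-dependent threshold. This is handled by taking $r$ as the global maximum of the per-triple minima as above; the integrality of types and the strict inequality $n/24 > $ (that maximum) ensure $2r < n/12$ holds simultaneously for every triple, so the gap-emptiness condition of Theorem \ref{full} holds globally. Once this uniform choice is justified, the corollary follows with no further computation.
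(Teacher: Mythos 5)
Your proof is correct and is essentially the paper's own argument: the paper proves Corollary \ref{4h} by simply invoking Theorem \ref{full}, and your verification---choosing a uniform integer $r < n/24$ (the maximum over triples of the minimal type) and using Lemma \ref{ij} to show that every other row then has type at least $n/8 - r > 2r$, so the gap $(r,2r]$ is empty---is precisely the specialization the paper leaves implicit. One cosmetic slip: your remark that $3r < n/8$ is ``equivalently $2r < n/16 + \varepsilon$'' is inexact (it is equivalent to $2r < n/12$), but your actual argument uses the correct inequality throughout.
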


Consider a Hadamard matrix  $H$ of order    $n>12$. Assume that $n$ is not a power of $2$ and $H$ has   exactly two distinct types  $\alpha$ and   $\beta$ for the   quadruples  of   rows with    $\alpha<\beta$.  Then   Lemma \ref{type} and Corollaries \ref{1new} and \ref{4h} result in  $$\frac{n}{24}\leqslant\alpha\leqslant\frac{n}{8}\left(1-\frac{1}{\sqrt{n-3}}\right)\leqslant\beta\leqslant\frac{n}{8}.$$

We recall that the   {\sl Sylvester Hadamard matrices}   are  recursively def{}ined as follows:
$$\mathsf{H}_1={\begin{bmatrix}1\end{bmatrix}} \quad   \text{ and } \quad
\mathsf{H}_{2^r}=\left[\begin{array}{rr} \mathsf{H}_{2^{r-1}}&  \mathsf{H}_{2^{r-1}} \\ \mathsf{H}_{2^{r-1}}& -\mathsf{H}_{2^{r-1}}\end{array}\right] \quad \text{ for } r=1, 2, \ldots$$
It follows from Theorem 4 of  \cite{coo}  that every quadruple of   rows of $\mathsf{H}_{2^r}$  is  of type $0$ or $2^{r-3}$  for all    $r\geqslant3$.  We below show that the converse    is also  true.

\begin{thm}\label{sylv}{\sl
Let $H$ be a Hadamard matrix   of order $8t$ whose  all quadruples of rows are of type $0$ or $t$. Then $H$ is equivalent  to  the   Sylvester Hadamard matrix.
}\end{thm}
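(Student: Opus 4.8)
The plan is to show that, after a harmless normalization, the rows of $H$ together with their negatives form an elementary abelian $2$-group under the Hadamard product $\circ$, and that this group structure forces $H$ to be Sylvester. First I would apply column negations to assume the first row equals $\mathsf{1}_n$, where $n=8t$; this is an equivalence operation and leaves all types unchanged. Writing $a_1,\ldots,a_n$ for the rows, observe that a quadruple $\{i,j,k,\ell\}$ has type $0$ exactly when $a_i\circ a_j\circ a_k\circ a_\ell=\pm\mathsf{1}_n$, and type $t=n/8$ exactly when this product sums to $0$.

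Next I would read the group structure off Lemma \ref{type}. With $m=n/4=2t$, the only admissible contributions to $\sum_s \kappa_s(m-2s)^2=m^2$ come from $s=0$ (contributing $m^2$ each) and $s=t$ (contributing $0$), so every triple of rows satisfies $\kappa_0=1$: each triple has a \emph{unique} completion to a type-$0$ quadruple. Applied to a triple $\{1,j,k\}$ with $j,k\neq1$, and using $a_1=\mathsf{1}_n$, this says $a_j\circ a_k=\pm a_\ell$ for a unique row $a_\ell$. Combined with $a_1\circ a_i=a_i$ and $a_i\circ a_i=\mathsf{1}_n$, it follows that $G:=\{\pm a_i:1\leqslant i\leqslant n\}$ is closed under $\circ$. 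Since distinct rows of a Hadamard matrix are neither equal nor negatives of one another, the $2n$ vectors in $G$ are distinct, so $G$ is a group of order $2n$ in which every element is its own inverse, i.e. an elementary abelian $2$-group. In particular $2n$ is a power of $2$, whence $n=2^k$ (this also follows immediately from Theorem \ref{full} with $r=0$).

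It remains to identify $H$ with $\mathsf{H}_{2^k}$ via the isomorphism $G\cong\mathbb{F}_2^{k+1}$. The distinguished element $e_0:=-\mathsf{1}_n\in G$ is nonzero, and each column $r$ defines a character $\chi_r\in\widehat{G}$ by $\chi_r(g)=g_r$; these characters are pairwise distinct (distinct columns of $H$ are distinct vectors) and all satisfy $\chi_r(e_0)=-1$. As there are exactly $n$ characters $\chi$ of $G$ with $\chi(e_0)=-1$, the columns realize precisely this set. Choosing an $\mathbb{F}_2$-basis $e_0,e_1,\ldots,e_k$ of $G$ and letting $W$ denote the span of $e_1,\ldots,e_k$, the rows of $H$ form a transversal of $G/\langle e_0\rangle$, so after negating rows (an equivalence operation) I may take the rows to be exactly $W$. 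In dual coordinates the columns become the characters whose first coordinate is $1$, and evaluating $\chi(g)$ for $g\in W$ recovers the entry $(-1)^{x\cdot c}$ of the Sylvester matrix; reordering the columns by the remaining $k$ coordinates then exhibits $H$ as equivalent to $\mathsf{H}_{2^k}$.

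The quick part is the group structure: the single identity $\kappa_0=1$ from Lemma \ref{type} does essentially all the combinatorial work, yielding closure under $\circ$ and the power-of-two conclusion at once. I expect the main obstacle to be purely bookkeeping, namely turning the abstract isomorphism $G\cong\mathbb{F}_2^{k+1}$ into an explicit Hadamard equivalence---verifying that the character table of $G$ restricted to the coset $\{\chi(e_0)=-1\}$ is literally the Sylvester matrix, while invoking only row and column negations and permutations. One should also check that the unique type-$0$ completion $a_\ell$ of $\{1,j,k\}$ is genuinely distinct from $1,j,k$, which holds since $a_j\circ a_k=\pm\mathsf{1}_n$ would force $a_j=\pm a_k$.
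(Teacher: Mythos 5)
Your proof is correct, but it takes a genuinely different route from the paper's. Both arguments begin with the same key fact, extracted from Lemma \ref{type}: since type-$t$ quadruples contribute nothing to the sum, every triple of rows has $\kappa_0=1$, i.e.\ a unique type-$0$ completion, which is the same as saying that $i\circ j\circ k$ agrees up to sign with a row of $H$. From there the paper proceeds by a hands-on induction: it shows the top $2^r\times n$ submatrix can be brought by column permutations to the block form $\left[\begin{array}{c|c|c}\mathsf{H}_{2^r} & \cdots & \mathsf{H}_{2^r}\end{array}\right]$, then uses orthogonality (the relation $H'x^\top=0$ together with invertibility of $\mathsf{H}_{2^r}$) plus the type-$0$ closure property to double this to a $2^{r+1}\times n$ top submatrix of the same shape; iterating yields the equivalence, with the power-of-two conclusion falling out as a by-product. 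You instead package the closure property algebraically: the rows and their negatives form an elementary abelian $2$-group $G$ of order $2n$ under $\circ$, the columns are exactly the $n$ characters of $G$ taking the value $-1$ on $-\mathsf{1}_n$, and after splitting $G=W\oplus\langle -\mathsf{1}_n\rangle$ and negating rows into $W$, the matrix becomes the character table of $W\cong\mathbb{F}_2^k$, which is $\mathsf{H}_{2^k}$ up to row and column permutations. Your approach is shorter and more conceptual: it gets $n=2^k$ immediately from Lagrange's theorem and explains \emph{why} the Sylvester matrix appears (it is a character table), at the cost of invoking duality for $\mathbb{F}_2^{k+1}$ and the standard fact that $\mathsf{H}_{2^k}$ has entries $(-1)^{x\cdot y}$, which strictly speaking needs a one-line induction from the recursive definition. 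The paper's proof is longer but entirely elementary, producing the equivalence by an explicit sequence of row and column operations. The small verifications your sketch leans on are all sound: distinct columns of $H$ give distinct characters because the columns of a Hadamard matrix are pairwise distinct, the evaluation-at-$(-\mathsf{1}_n)$ map cuts $\widehat{G}$ exactly in half, and each coset $\{a_i,-a_i\}$ meets $W$ exactly once, so the required row negations exist.
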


\begin{proof}
Fix three rows   of $H$ and let $\kappa_0$ and $\kappa_t$ be the number of other  rows which respectively are of type $0$ and $t$  with the these f{}ixed   rows.
By applying Lemma \ref{type}, we f{}ind that $\kappa_0=1$ and    $\kappa_t=n-4$, where $n=8t$.  It is easy   to see that,  for every triple $\{i, j, k\}$ of rows
of $H$,  the vector $i\circ j\circ k$ is equal to the unique  row $\ell$ in $H$ with  $T_{ijk\ell}=0$
up to  negation.
This means that if we write the f{}irst three rows of $H$ as the form
$$\begin{array}{rrrr}
2t & 2t & 2t & 2t \\
\p &\p &\p &\p\\
\p & \p & \n & \n\\
\p & \n & \p & \n\\
\end{array}$$
then we may consider
$$\begin{array}{rrrr}
2t  & 2t & 2t & 2t \\
\p &\n &\n &\p
\end{array}$$
as   the forth row of $H$.
By a sequence of   column permutations,   we may consider  the  $4\times n$    top submatrix of $H$  as
$$\left[\begin{array}{c|c|c} \mathsf{H}_4 & \cdots & \mathsf{H}_4 \end{array}\right].$$
In order to proceed,    assume that $n$ is divisible by $2^r$, for some  $r\geqslant2$,  and the  $2^r\times n$    top submatrix of $H$ is written as
$$\left[\begin{array}{c|c|c} \mathsf{H}_{2^r} & \cdots & \mathsf{H}_{2^r} \end{array}\right].$$
Again, by a sequence of   column permutations,  we may consider  the  $2^r\times n$    top submatrix of $H$  as
\begin{equation}\label{2r}H'=\left[\begin{array}{c|c|c} \undermat{\mathlarger{\mathlarger{\tfrac{n}{2^r}}}}{K_1 \cdots K_1} &    \cdots  &
\undermat{\mathlarger{\mathlarger{\tfrac{n}{2^r}}}}{K_{2^r} \cdots K_{2^r}} \end{array}\right],\vspace{6mm}\end{equation}
where $K=\mathsf{H}_{2^r}$   and $K_i$ is the $i$th column of $K$    for  $i=1,  \ldots, 2^r$.
Let
$$\begin{array}{rrrrrr} x & : &  x_1 &   \cdots  & x_{2^r}  \end{array}$$
be any of the  remaining  rows of $H$. In view of \eqref{2r},   by a column permutation,  we may assume that
$$\begin{array}{rrr}
& \alpha_i & \beta_i \\
x_i \, \, \, : \,  &  \p &\n
\end{array}$$
for any $i$.
Since     $H'x^\top=0$, it is not hard to see that
$$K\left[\begin{array}{c} \alpha_1-\beta_1  \\\hline \vdots \\\hline \alpha_{2^r}-\beta_{2^r}\end{array}\right]={\bf 0}.$$ As $K$ is an  invertible matrix, we conclude that   $\alpha_i=\beta_i$ for any $i$. Thus,   we may rewrite the f{}irst $2^r+1$  rows of $H$ in  the form
$$\begin{array}{cccccc} K & \cdots & K  & K & \cdots & K  \\ \p & \cdots & \p & \n & \cdots & \n\cdot \end{array}$$
For any     $i\in\{2, 3,  \ldots, 2^r\}$,  $H$ has a unique  row $\rho_i=1\circ i\circ\rho_1$ corresponding to  the rows  $1$, $i$, and $\rho_1=2^r+1$ with $T_{1i\rho_1\rho_i}=0$. So, one can easily deduce  that  the   f{}irst $2^{r+1}$  rows of $H$ have  the form
$$\begin{array}{rrrrrr} K & \cdots & K & K & \cdots & K \\ K & \cdots & K & -K & \cdots & -K.\end{array}$$
This shows in particular that $n$ is divisible by $2^{r+1}$.
Also, by a sequence of   column permutations,  we may consider  the  $2^{r+1}\times n$    top submatrix of $H$  as
$$\left[\begin{array}{c|c|c} \mathsf{H}_{2^{r+1}} & \cdots & \mathsf{H}_{2^{r+1}} \end{array}\right].$$
Now,  the assertion clearly follows by repeating  the above  process.
\end{proof}

The following result is an  analogue  of Theorem \ref{sylv} and is easily derived  from Corollary \ref{4h}.

\begin{cor}{\sl
Let $H$ be a Hadamard matrix   of order $n=8t+4$ whose  all quadruples of rows are of type $1$ or $t$. Then $n\in\{4, 12, 20\}$.
}\end{cor}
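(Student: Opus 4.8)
The plan is to reduce the statement to Corollary~\ref{4h} by showing that, once the order is large enough, every triple of rows is accompanied by a row of the small type $1$, which then falls below the threshold $n/24$. Writing $n = 4m$, the hypothesis $n = 8t+4$ gives $m = 2t+1$, so the two admissible types $1$ and $t$ contribute the values $(m-2)^2 = (2t-1)^2$ and $(m-2t)^2 = 1$ to the sum in Lemma~\ref{type}.

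First I would fix an arbitrary triple of rows and let $\kappa_1$ and $\kappa_t$ count the rows forming type $1$ and type $t$ with it. Lemma~\ref{type} together with $\kappa_1 + \kappa_t = n-3 = 8t+1$ yields the linear system
$$\kappa_1(2t-1)^2 + \kappa_t = (2t+1)^2, \qquad \kappa_1 + \kappa_t = 8t+1.$$
Subtracting eliminates $\kappa_t$ and gives $\kappa_1\cdot 4t(t-1) = 4t(t-1)$. For $t \geq 2$ the coefficient $4t(t-1)$ is nonzero, forcing $\kappa_1 = 1$; in particular every triple of rows has exactly one row of type $1$ with it.

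Next I would invoke the threshold. For $t \geq 3$ we have $n = 8t+4 > 24$, hence $1 < n/24$, and the type-$1$ row attached to each triple satisfies $T_{ijk\ell} = 1 < n/24$. Corollary~\ref{4h} then forces $n$ to be a power of $2$. Since $n = 4(2t+1)$ with $2t+1$ odd and greater than $1$, the order carries a nontrivial odd factor and cannot be a power of $2$---a contradiction. Thus no such matrix exists when $t \geq 3$, so $t \leq 2$ and $n = 8t+4 \in \{4, 12, 20\}$.

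I expect the only delicate point to be the bookkeeping at small $t$: the equation $\kappa_1\cdot 4t(t-1) = 4t(t-1)$ does not pin down $\kappa_1$ when $t \in \{0,1\}$, and the inequality $1 < n/24$ fails precisely at $t = 2$, where $n = 20$. Both facts are harmless, since those values of $t$ already land in the target set $\{4,12,20\}$ and the corollary only asserts that the order is one of these, not that matrices realizing them exist. The substantive content is therefore entirely contained in the contradiction obtained for $t \geq 3$.
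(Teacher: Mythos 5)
Your proof is correct and takes essentially the same route as the paper: the paper gives no explicit argument, stating only that the result is ``easily derived from Corollary \ref{4h},'' and your write-up (using Lemma \ref{type} to force $\kappa_1=1$ for every triple when $t\geqslant2$, then applying Corollary \ref{4h} for $t\geqslant3$ to contradict the odd factor $2t+1$ of $n$) is precisely the derivation the paper leaves implicit, with the degenerate cases $t\leqslant2$ handled correctly.
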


\begin{exm}\label{exa13}{\rm
There are two inequivalent  Hadamard matrices of order $32$ with the same  prof{}ile
$$\left\{\left(\begin{array}{cc}   0  & 4 \\  1    & 28  \end{array}\right)^{^{\displaystyle{[1376]}}},   \left(\begin{array}{ccc}   1  & 3 & 4  \\  1 & 7    & 21 \end{array}\right)^{^{\displaystyle{[3584]}}}\right\},$$
where the exponents indicate the multiplicities.
These    Hadamard matrices, which are obtained in   \cite{khara}, satisfy   the condition of Corollary \ref{4h}.
}\end{exm}

\section{Concluding remarks}

We showed that  Hadamard matrices   with exactly  one type for quadruples  of rows exist  only  in    orders  $ 4$ and $12$. The classif{}ication of   Hadamard matrices with exactly  two distinct values for  type of  quadruples  of rows seems to be a  hard problem.  Even, in   order $36$ the problem is already hard.   We carried  out  a non-exhaustive computer   search for    Hadamard matrices of order $36$  having type   $3$ or  $4$  for quadruples  of rows. We obtained  only f{}ive  such Hadamard  matrices which had  been already found   in \cite{spe}.    It is an interesting question  if there exists    an inf{}inite family of   Hadamard matrices with exactly  two distinct types besides  the     Sylvester Hadamard matrices.

\section*{Acknowledgments}

This research  was in part supported by grants from IPM.

{}

\end{document}